\theoremstyle{plain}
\newtheorem{thm}{Theorem}
\newtheorem{lem}[thm]{Lemma}
\theoremstyle{plain}
\newtheorem{defn}[thm]{Definition}
\theoremstyle{plain}
\newtheorem{ex}[thm]{Example}
\theoremstyle{nonumberplain}
\newtheorem{proof}{Proof}
\theoremstyle{empty}
\newcommand{\Z}{\mathbb{Z}}
\newcommand{\Q}{\mathbb{Q}}
\newcommand{\R}{\mathbb{R}}
\newcommand{\C}{\mathbb{C}}
\newcommand{\gauss}{\mbox{$_2 F_1$}}
\newcommand{\A}{\mathcal{A}}
\newcommand{\HA}{H_\A(\bbeta)}
\newcommand{\trian}{\mathcal{T}}
\newcommand{\lat}{\mathbb{L}}
\newcommand{\inv}[1]{\frac{1}{#1}}
\newcommand{\half}{\inv{2}}
\newcommand{\subs}{\subseteq}
\newcommand{\poch}[2]{(#1)_{#2}}
\newcommand{\vol}{\tr{Vol}}
\newcommand{\ld}{\ldots}
\newcommand{\tr}{\textrm}
\newcommand{\vect}[1]{\boldsymbol{#1}}
\newcommand{\va}{\vect{a}}
\newcommand{\vb}{\vect{b}}
\newcommand{\ve}{\vect{e}}
\newcommand{\vl}{\vect{l}}
\newcommand{\vz}{\vect{z}}
\newcommand{\bbeta}{\vect{\beta}}
\newcommand{\bgam}{\vect{\gamma}}
\newcommand{\ahypergeometric}{$\A$-hyper\-geo\-me\-tric }
\newlength{\eerstekollauricella}
\newlength{\tweedekollauricella}
\newlength{\eerstekolhorn}
\newlength{\tweedekolhorn}
\title{Explicit formulas for parametrized families of irreducible algebraic Appell, Lauricella and Horn functions}
\author{Esther Bod\thanks{esther.bod@gmail.com. Department of Mathematics, Utrecht University, P.O.\ Box 80010, 3508 TA Utrecht, The Netherlands.
This work was supported by the Netherlands Organisation for Scientific Research (NWO) by grant number OND1331860.}}
\date{\today}
\begin{document}

\maketitle

\begin{abstract}
For suitable choices of the parameters the Appell, Lauricella and Horn functions are algebraic.
In~\cite{bod_algebraic_appell_horn} it is shown that there exist families of algebraic functions for the $F_4$, $F_C$, $G_3$, $H_4$, $H_5$ and $H_7$ functions, depending on one parameter $r$.
For each of these we give a solution of the form $f^r g$ of the system of differential equations and give formulas for $f$ and $g$.
\end{abstract}

The hypergeometric Gauss function is defined by 
\begin{equation*}
\gauss(a,b,c|z) = \sum_{n \geq 0} \frac{\poch{a}{n} \poch{b}{n}}{\poch{c}{n} n!} z^n.
\end{equation*}
In 1873 Schwarz determined the parameters $(a,b,c)$ such that $\gauss(a,b,c|z)$ is irreducible and algebraic~\cite{schwarz_gaussfunction}.
Up to translations of $(a,b,c)$ over $\Z^3$ there are three 1-parameter families of irreducible algebraic Gauss functions: 
$\gauss(r,-r,\half|z)$, $\gauss(r,r+\half,\half|z)$ and $\gauss(r,r+\half,2r|z)$ with $r \in \Q \setminus \half \Z$.
The following formulas are well-known:
\begin{align}\label{eq:formulas_gauss} 
\gauss\left(r,-r,\half|z\right) & = \half \left( (\sqrt{1-z}+i\sqrt{z})^{2r} + (\sqrt{1-z} - i\sqrt{z})^{2r} \right), \notag \\ 
\gauss\left(r,r+\half,\half|z\right) & = \half \left( (1+\sqrt{z})^{-2r} + (1-\sqrt{z})^{-2r} \right) \qquad \tr{and} \\
\gauss\left(r,r+\half,2r|z\right) & = \frac{(1+\sqrt{1-z})^{1-2r}}{2^{1-2r} \sqrt{1-z}}. \notag
\end{align}
It follows from these formulas that the monodromy group is the dihedral group with $2q$ elements, where $q$ is the denominator of $2r$.
The Appell, Lauricella and Horn functions are multi-variable generalizations of the Gauss function~\cite{appell_series_hypergeometric_1880,appell_fonctions_hypergeometric_1882,horn_convergenz_hypergeometrische_reihen_1889,lauricella_funcioni_ipergeometriche_piu_variabili,horn_hypergeometrische_funktionen_1931}.
The parameters such that these functions are irreducible and algebraic are computed in~\cite{bod_algebraic_appell_horn}.
Similar to the Gauss function, there are 1-parameter families of irreducible algebraic functions for the Appell $F_4$, the Lauricella $F_C$ and the Horn $G_3$, $H_1$, $H_4$, $H_5$ and $H_7$ functions.
The $H_7$ function is isomorphic to $H_4$, so we will not consider it in this paper.
Furthermore, some of the functions are symmetric with respect to permutations of the parameters.
In all cases, algebraicity only depends on the fractional part of the parameters. 
Up to these permutations and equivalence of the parameters modulo $\Z$, the families of algebraic functions are $F_C(r,-r,\half,\ld,\half|\vect{z})$, $F_C(r,r+\half,\half,\ld,\half|\vect{z})$ and $F_C(r,r+\half,\half,\ld,\half,2r|\vect{z})$ and their 2-variable counterparts $F_4(r,-r,\half,\half|x,y)$, $F_4(r,r+\half,\half,\half|x,y)$ and $F_4(r,r+\half,\half,2r|x,y)$, as well as $G_3(r,-r|x,y)$, $H_4(r,-r,\half,\half|x,y)$, $H_4(r,r+\half,\half,\half|x,y)$, $H_4(r,r+\half,\half,2r|x,y)$ and $H_5(r,-r,\half|x,y)$ with $r \in \Q \setminus \half \Z$.  
The goal of this paper is to give formulas similar to~\eqref{eq:formulas_gauss} for these families. \\

Our results are inspired by an observation by Beukers.
In~\cite{beukers_algebraic_ahypergeometric_functions} he notes that the function $G_3(r,1-r|x,y)$ is of a very special form: 
\begin{equation}\label{eq:g3equation}
G_3(r,1-r|x,y) = f(x,y)^r \cdot \sqrt{\frac{g(x,y)}{\Delta(x,y)}}
\end{equation}
where $f$ and $g$ generate the same cubic extension of $\C[x,y]$ and $\Delta$ is the discriminant of the minimal polynomial of $f$.
More explicitly:
\begin{gather}
y f(x,y)^3 + f(x,y)^2 - f(x,y) - x = 0, \notag \\ 
\qquad \qquad g(x,y) = - 3y^2 f(x,y)^2 - 2y f(x,y) + 4y + 1 \qquad \tr{and} \label{eq:g3equations_fg} \\ 
\Delta(x,y) = 1+4x+4y+18x y - 27 x^2 y^2. \notag 
\end{gather}
In particular, $G_3(r,1-r|x,y)$ only depends on $r$ in the exponent of $f$. \\

By computing $\frac{\Phi(r|\vect{z})^2}{\Phi(2r|\vect{z})}$ for several values of $r$, we see that the other families of algebraic Appell, Lauricella and Horn functions are not of the form 
\begin{equation}\label{eq:function_power}
\Phi(r | \vect{z}) = f(\vect{z})^r g(\vect{z}). 
\end{equation}
Therefore, we will consider the systems of differential equations that these functions satisfy and look for other solutions of the system that are indeed of the form~\eqref{eq:function_power}, with $f$ and $g$ generating the same extension of $\C(\vect{z})$.
Usually the Appell and Horn hypergeometric system are viewed as solutions of systems of two second order differential equations.
However, we will consider \ahypergeometric functions, since this makes it easier to give a basis for the solution space, as well as to check the differential equations for the Lauricella $F_C$ function.

We first introduce \ahypergeometric systems of differential equations and explain their relationship to the classical hypergeometric functions. 
Then we indicate how a basis of the solution space can be found using triangulations of the convex hull of $\A$.
The strategy to find $\Phi$ is as follows:
if there exists a solution $\Phi$ of the form~\eqref{eq:function_power}, then it can be written on the basis of the solution space.
We make an educated guess about the coefficients of $\Phi$ on this basis and the formulas for $f$ and $g$.
To prove that these guesses are correct, we change our perspective and define $f$ and $g$ by the formulas we just found, and define $\Phi$ to be the linear combination of the element of the basis of the solution space given by the coefficients we guessed.
For each of the Appell and Horn functions, we give formulas for $f$ and $g$ and show that $f^r g$ is indeed a solution of the system of differential equations.
Furthermore, we show that $\Phi$ is indeed equal to $f^r g$.
For the Lauricella functions, we don't explicitly compute the basis, but we do give equations for $f$ and $g$ and show that $f^r g$ is a solution of the system of differential equations. \\

\ahypergeometric functions were introduced in the 1980's by Gelfand, Graev, Kapranov and Zelevinsky in a series of papers~\cite{ggz_holonomic_systems_equations_series_hypergeometric_type,gzk_equations_hypergeometric_newton_polyhedra,gzk_hypergeometric_functions_and_toral_manifolds,gkz_hypergeometric_functions_and_toral_manifolds_correction}.
They are defined as follows: 

\begin{defn}\label{defn:Ahypergeometric_function}
Let $\A=\{\va_1, \ld, \va_N\}$ be a finite subset of $\Z^r$ such that $\Z \A = \Z^r$ and there exists a linear form $h$ on $\R^r$ such that $h(\va_i)=1$ for all $i$.
The \emph{lattice of relations} of $\A$ is $\lat = \{(l_1, \ld, l_N) \in \Z^N \ | \ l_1 \va_1 + \ld + l_N \va_N = 0 \}$.
Let $\bbeta \in \C^r$ and denote by $\partial_i$ the differential operator $\frac{\partial}{\partial z_i}$.
The \emph{\ahypergeometric system associated to $\A$ and $\bbeta$}, denoted $\HA$, consists of two sets of differential equations:
\begin{itemize}
\item
the \emph{structure equations}: for all $\vl = (l_1, \ld, l_N) \in \lat$ 
\begin{equation*}
\square_{\vl} \Phi = 
\left( \prod_{l_i>0} \partial_i^{l_i} \right) \Phi - \left( \prod_{l_i<0} \partial_i^{-l_i} \right) \Phi = 0. 
\end{equation*}
\item 
the \emph{homogeneity} or \emph{Euler equations}: for $1 \leq i \leq r$ 
\begin{equation*}
a_{1i} z_1 \partial_1 \Phi + \ld + a_{Ni} z_N \partial_N \Phi = \beta_i \Phi.
\end{equation*}
\end{itemize}
The solutions $\Phi(z_1, \ld, z_N)$ of this system are called \emph{\ahypergeometric functions}.
\end{defn}

The connection between the Appell, Lauricella and Horn hypergeometric functions and systems of \ahypergeometric differential equations is given by $\Gamma$-series~\cite{gzk_hypergeometric_functions_and_toral_manifolds}.
Let $\bgam \in \C^N$ such that $\gamma_1 \va_1 + \ld + \gamma_N \va_N = \bbeta$.
Then it can easily be checked that 
\begin{equation}\label{eq:gamma_series}
\Phi_{\bgam}(z_1, \ld, z_N) = \sum_{(l_1, \ld, l_N) \in \lat} \frac{z_1^{l_1+\gamma_1} \cdot \ld \cdot z_n^{l_N+\gamma_N}}{\Gamma(l_1+\gamma_1+1) \cdot \ld \cdot \Gamma(l_N+\gamma_N+1)}
\end{equation}
is a formal solution of $\HA$.
This function has $N$ variables, but up to the monomial factor $z_1^{\gamma_1} \cdots z_N^{\gamma_N}$ it can be viewed as a function in $d=N-r$ variables:
the lattice $\lat$ has rank $d$ and hence has a basis $\{\vb_1, \ld, \vb_d\} \subs \Z^N$.
Now $\vz^{\vb_1}, \ld, \vz^{\vb_d}$ can be used as new variables.
We will call this function in $d$ variables the \emph{dehomogenization} of $\Phi_{\bgam}$. 
The Appell, Lauricella and Horn functions can be viewed as dehomogenizations of suitable \ahypergeometric functions.
For example, with $\A = \{\ve_1, \ve_2, \ve_3, \ve_4, \ve_1+\ve_2-\ve_3, \ve_1+\ve_2-\ve_4\}$ we get $\lat = (-1, -1, 1, 0, 1, 0)\Z \oplus (-1, -1, 0, 1, 0, 1)\Z$.
Choose $\bbeta = (-a, -b, c_1-1, c_2-1)$ and $\bgam = (-a, -b, c_1-1, c_2-1, 0, 0)$ to obtain the $\Gamma$-series
\begin{multline*}
\Phi_{\bgam}(z_1, \ld, z_6) = z_1^{-a} z_2^{-b} z_3^{c_1-1} z_4^{c_2-1} \,\,\cdot \\
\sum_{m,n \in \Z} \frac{\left( \frac{z_3 z_5}{z_1 z_2} \right)^m \left( \frac{z_4 z_6}{z_1 z_2} \right)^n}{\Gamma(1-a-m-n) \Gamma(1-b-m-n) \Gamma(m+c_1) \Gamma(n+c_2) \Gamma(m+1) \Gamma(n+1)}.
\end{multline*}
Up to a constant factor, dehomogenization gives the Appel $F_4$ function 
\begin{equation*}
F_4(a,b,c_1,c_2|x,y) = \sum_{m,n \geq 0} \frac{\poch{a}{m+n} \poch{b}{m+n}}{\poch{c_1}{m} \poch{c_2}{n} m! n!} x^m y^n.
\end{equation*}

Gelfand, Kapranov and Zelevinsky showed that there is a one to one correspondence between regular triangulations of the convex hull of $\A$ and locally converging bases of $\Gamma$-series solutions~\cite{gzk_hypergeometric_functions_and_toral_manifolds}.
In many interesting cases, the dimension of the solution space of $\HA$ can be computed from the combinatorics of $\A$:

\begin{defn}\label{defn:normality}
$\A$ is called \emph{normal} if the integral points in the non-negative cone spanned by $\A$ are integral non-negative combinations of the vectors $\va_i$.
\end{defn}

\begin{thm}[Gelfand, Kapranov, Zelevinsky, Adolphson]\label{thm:dim_sol}
Let $\A$ and $\bbeta$ as in Definition~\ref{defn:Ahypergeometric_function}.
Then $\HA$ is holonomic.
If $\A$ is normal, then the dimension of the solution space is equal to the simplex volume of the convex hull of $\A$.
\end{thm}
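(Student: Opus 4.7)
The plan is to prove the two assertions separately, working in the Weyl algebra $D_N = \C\langle z_1, \ld, z_N, \partial_1, \ld, \partial_N \rangle$ and identifying $\HA$ with the left $D_N$-module $M_\A := D_N / J_\A$, where $J_\A$ is generated by the Euler operators $E_i - \beta_i := \sum_{j=1}^N a_{ji} z_j \partial_j - \beta_i$ for $1 \leq i \leq r$ and by the toric operators $\square_{\vl}$ for $\vl \in \lat$. For holonomicity I would compute the characteristic variety of $M_\A$ in $T^*\C^N$. The principal symbols of the $\square_{\vl}$ generate the toric ideal $I_\A \subs \C[\xi_1, \ld, \xi_N]$ (since $\lat$ is the lattice of relations of $\A$, every binomial in $I_\A$ is an integer combination of the $\xi^{\vl_+} - \xi^{\vl_-}$), and its vanishing locus is the affine toric variety $X_\A$ of dimension $r$. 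Intersecting with the zero locus of the symbols of the Euler operators and exploiting torus-equivariance, one identifies the characteristic variety with the conormal variety to the stratification of $\C^N$ by orbits of the torus $T = (\C^*)^r$ acting via $\A$; this conormal variety is Lagrangian, hence of pure dimension $N$, so $M_\A$ is holonomic.

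For the rank formula the upper bound is the easier direction and is essentially what the paper recalls just above: each regular triangulation $\trian$ of the convex hull of $\A$ produces a set of $\Gamma$-series solutions indexed by the maximal simplices of $\trian$, and their total count equals the simplex volume. The delicate direction is the matching lower bound. The main input here is Hochster's theorem: under the normality hypothesis on $\A$, the semigroup algebra $\C[\N\A]$ is Cohen--Macaulay. Combining this with the Euler--Koszul homology machinery (Adolphson; Matusevich--Miller--Walther), Cohen--Macaulayness forces the higher Euler--Koszul homology modules to vanish, so the rank of $M_\A$ at a generic point equals the degree of the projective toric variety associated to $\A$. By Kouchnirenko's volume formula this degree is precisely the simplex volume of the convex hull of $\A$.

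The substantive obstacle is the vanishing of the higher Euler--Koszul homology in the second step: without the Cohen--Macaulay property one only obtains an inequality --- and indeed non-normal examples are known where the rank strictly exceeds the volume --- so the normality hypothesis enters essentially at this point, and essentially all the content of the rank formula is encoded in this homological vanishing.
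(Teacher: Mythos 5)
The paper does not prove Theorem~\ref{thm:dim_sol} at all: it is quoted as a background result, with the attribution to Gelfand--Kapranov--Zelevinsky and Adolphson serving as the reference, and only its conclusion (rank $=$ simplex volume, used to count basis elements of the solution space) is used later. So there is no internal proof to compare you with; measured against the cited literature, your sketch follows the standard route. For holonomicity, note that you only need \emph{containment} of the characteristic variety in the union of conormal varieties to the torus-orbit closures: the symbols of the operators $\square_{\vl}$, $\vl\in\lat$, generate the toric ideal, the resulting variety together with the Euler symbols has dimension at most $N$, and Bernstein's inequality then gives holonomicity. Claiming that the characteristic variety \emph{is} that conormal variety is more than you need and more than this argument proves.

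On the rank formula you have the two inequalities labelled backwards. Exhibiting, via a regular triangulation, a family of $\Gamma$-series solutions whose cardinality is the simplex volume bounds the dimension of the solution space from \emph{below}, not above --- and even that requires the nonresonance hypothesis for their independence (or an upper-semicontinuity argument in $\bbeta$ at resonant parameters). The delicate direction under the normality hypothesis is the \emph{upper} bound: that no parameter $\bbeta$ makes the rank jump above the volume. That is exactly what your second step supplies --- Hochster's theorem gives Cohen--Macaulayness of the semigroup ring, the Euler--Koszul (Adolphson; Matusevich--Miller--Walther) vanishing then identifies the rank with the degree of the projective toric variety for every $\bbeta$, and the degree equals the normalized volume. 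Since that machinery yields the equality outright, the mislabelling does not sink the argument, but as written your ``easy direction'' paragraph establishes the wrong inequality, and the sentence about non-normal examples (where rank \emph{exceeds} volume) should have tipped you off that normality is needed for the upper bound rather than the lower one.
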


By simplex volume we mean the volume that is normalized so that an elementary simplex has volume 1.
One can easily show that $\A$ is normal for all Appell, Lauricella and Horn functions.

The correspondence between regular triangulations and local bases is as follows.
Let $\trian$ be a regular triangulation.
Each simplex in $\trian$ is the convex hull of $r$ elements of $\A$, say $\{\va_i \ | \ i \in I\}$.
We write $\trian = \{I_1, \ld, I_n\}$ if the simplices have vertices $\{\va_i \ | \ i \in I_j\}$ for $1 \leq j \leq n$.
Any $\bgam \in \C^N$ such that $\gamma_j \in \Z$ if $j \not\in I$ gives a convergent $\Gamma$-series $\Phi_{\bgam}$.
It can be shown that there are $\det((\va_i)_{i \in I})$ choices for $\bgam$ such that the resulting $\Gamma$-series are distinct.
In case $\HA$ is totally non-resonant, i.e., $\bbeta+\Z^r$ contains no point in any hyperplane spanned by $r-1$ independent elements of $\A$, these series are linearly independent.
The series coming from differents simplices are also independent, and hence these series form a basis of the solution space of $\HA$. \\

We now prove some of the formulas~\eqref{eq:g3equation} for $G_3(r,1-r|x,y)$ and similar formulas for $F_4(r,r+\half,\half,\half|x,y)$ and $F_C(r,r+\half,\half,\ld,\half|\vect{z})$.
As the proofs for the other functions are similar, we omit them here, but the results can be found in Tables~\ref{tab:F41_formula} up to~\ref{tab:H5_formula}.
The results for the Appell and Horn functions are stated in terms of the classical Horn systems.
These systems are described in~\cite{dickenstein_hypergeometric_functions_binomials} and consist of two second order partial differential equations.
These equations can also be found in the tables, using the notation $\theta_x = x \frac{\partial}{\partial x}$ and $\theta_y = y \frac{\partial}{\partial y}$.
The Horn systems can have more independent solutions than the \ahypergeometric systems: apart from the $\Gamma$-series solutions, there can be monomial solutions.
By~\cite[Theorem~2.5]{dickenstein_matusevich_sadykov_bivariate_hypergeometric_dmodules}, the rank of all Horn systems we consider is 4.
For the $F_4$, $H_4$ and $H_5$ functions, this equals the rank of the \ahypergeometric system.
However, for $G_3(r,1-r|x,y)$ the Horn system has a so called Puiseux monomial solution $x^{\frac{r-2}{3}} y^{\frac{-r-1}{3}}$ that is not a solution of the \ahypergeometric system. \\ 

\begin{lem}\label{lem:f4proof_equation}
Let $\Phi(r|x,y) = (\sqrt{x}+\sqrt{y}-1)^{-2r}$.
Then $\Phi(r|x,y)$ is a solution of the system of Horn equations for $F_4(r,r+\half,\half,\half|x,y)$.
Furthermore, on the basis $\{\Phi_1, \Phi_2, \Phi_3, \Phi_4\}$ of the solution space as in Table~\ref{tab:F42_formula}, we have $\Phi = \Phi_1 + 2r \Phi_2 + 2r \Phi_3 + 2r(2r+1) \Phi_4$.
\end{lem}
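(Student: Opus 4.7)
The plan splits the statement into two independent steps: (1) verify directly that $\Phi=(\sqrt{x}+\sqrt{y}-1)^{-2r}$ is annihilated by the two Horn operators for $F_4(r,r+\tfrac12,\tfrac12,\tfrac12)$, and (2) expand $\Phi$ as a power series in $\sqrt{x},\sqrt{y}$ and regroup the terms to identify them with the four basis elements of Table~\ref{tab:F42_formula}.

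For step (1) I would set $u=\sqrt{x}+\sqrt{y}-1$, so that $\Phi=u^{-2r}$, and compute the action of the Euler operators via the chain rule using $\theta_x u=\tfrac12\sqrt{x}$. The identity $\sqrt{x}+\sqrt{y}=u+1$ makes the calculation collapse very quickly: one finds $(\theta_x+\theta_y+r)\Phi=-ru^{-2r-1}$ and then $(\theta_x+\theta_y+r+\tfrac12)(-ru^{-2r-1})=\tfrac{r(2r+1)}{2}u^{-2r-2}$. A parallel short computation yields $\tfrac{1}{x}\theta_x(\theta_x-\tfrac12)\Phi=\tfrac{r(2r+1)}{2}u^{-2r-2}$, and subtracting the two expressions gives zero. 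The second Horn equation follows from the first by the $x\leftrightarrow y$ symmetry of $\Phi$ (and of the parameters).

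For step (2), after fixing a branch, the binomial theorem gives
\begin{equation*}
\Phi = \sum_{m,n\geq 0}\frac{\poch{2r}{m+n}}{m!\,n!}(\sqrt{x})^m(\sqrt{y})^n.
\end{equation*}
Split this sum into four pieces $S_{\varepsilon_1,\varepsilon_2}$ according to the parities $(\varepsilon_1,\varepsilon_2)\in\{0,1\}^2$ of $(m,n)$, producing overall prefactors $1,\sqrt{x},\sqrt{y},\sqrt{xy}$. Applying the duplication identities
\begin{gather*}
\poch{2r}{2k}=4^k\poch{r}{k}\poch{r+\tfrac12}{k},\qquad \poch{2r}{2k+1}=2r\cdot 4^k\poch{r+\tfrac12}{k}\poch{r+1}{k},\\
\poch{2r}{2k+2}=2r(2r+1)\cdot 4^k\poch{r+\tfrac32}{k}\poch{r+1}{k},\qquad (2p)!=4^p p!\poch{\tfrac12}{p},\qquad (2p+1)!=4^p p!\poch{\tfrac32}{p}
\end{gather*}
collapses each $S_{\varepsilon_1,\varepsilon_2}$ into a single $F_4$-series whose parameters are precisely those of the corresponding basis element $\Phi_i$ of Table~\ref{tab:F42_formula}, and the residual constants read off as $1,2r,2r,2r(2r+1)$.

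The Horn-operator check is essentially forced by the relation $\sqrt{x}+\sqrt{y}=u+1$, so the real work lies in step (2). I do not expect any conceptual difficulty there either, only careful bookkeeping: the delicate point is confirming that in each of the four parity classes the shifted parameters produced by the duplication identities agree precisely with those of the intended basis element in the table, so that no stray factors or parameter shifts survive.
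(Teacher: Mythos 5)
Your proof is correct, but the second half takes a genuinely different route from the paper. For the first claim the paper simply asserts that the Horn equations ``can easily be checked''; your computation with $u=\sqrt{x}+\sqrt{y}-1$, $\theta_x u=\tfrac12\sqrt{x}$ and the collapse via $\sqrt{x}+\sqrt{y}=u+1$ is exactly that check, carried out correctly (the operators $(\theta_x+\theta_y+r)$ and $(\theta_x+\theta_y+r+\half)$ commute, so your order of application is harmless, and the $x\leftrightarrow y$ symmetry does give the second equation since both lower parameters equal $\half$). For the identification $\Phi=\Phi_1+2r\Phi_2+2r\Phi_3+2r(2r+1)\Phi_4$, the paper argues structurally: it invokes the rank-$4$ result of Dickenstein--Matusevich--Sadykov, takes the $\Gamma$-series basis attached to the triangulation, concludes that $\Phi$ must be \emph{some} linear combination $\sum c_i(r)\Phi_i$, and then pins down the $c_i(r)$ by matching only the lowest-order Taylor coefficients. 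You instead prove the full series identity directly: expanding $\Phi=\sum_{m,n\ge 0}\frac{\poch{2r}{m+n}}{m!\,n!}(\sqrt{x})^m(\sqrt{y})^n$ (on the branch with value $1$ at the origin, the same implicit choice the paper makes), splitting by parity of $(m,n)$, and applying the duplication identities — all of which I checked are stated correctly — reproduces each $F_4$ basis series with the constants $1,2r,2r,2r(2r+1)$. Your route is more elementary and self-contained: it needs neither the rank theorem nor the linear independence (non-resonance) of the $\Gamma$-series, and it works uniformly in $r$; the paper's route avoids the Pochhammer bookkeeping and only requires comparing finitely many coefficients, which is why it scales more easily to the other families in the tables. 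No gap either way.
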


\begin{proof}
It can easily be checked that $\Phi(r|x,y)$ is a solution of the Horn system of differential equations from Table~\ref{tab:F42_formula}.

It follows from~\cite[Theorem~2.5]{dickenstein_matusevich_sadykov_bivariate_hypergeometric_dmodules} that the rank of the Horn system is the rank $\vol(\A)=4$ of the \ahypergeometric system.
We choose the triangulation $\{\{1, 2, 3, 4\}$, $\{1, 2, 3, 6\}$, $\{1, 2, 4, 5\}$, $\{1, 2, 5, 6\}\}$ of $\A$ and 
the corresponding vectors 
$\bgam_1 = (-r, -r-\half, -\half, -\half, 0, 0)$, 
$\bgam_2 = (-r-\half, -r-1, -\half, 0, 0, \half)$, 
$\bgam_3 = (-r-\half, -r-1, 0, -\half, \half, 0)$ and 
$\bgam_4 = (-r-1, -r-\frac{3}{2}, 0, 0, \half, \half)$.
This gives the dehomogenized basis 
$\Phi_1(r|x,y) = F_4\left(r,r+\half,\half,\half|x,y\right)$, 
$\Phi_2(r|x,y) = \sqrt{y} F_4\left(r+\half, r+1, \half, \frac{3}{2}|x,y\right)$,  
$\Phi_3(r|x,y) = \sqrt{x} F_4\left(r+\half, r+1, \frac{3}{2}, \half|x,y\right)$ and
$\Phi_4(r|x,y) = \sqrt{x y} F_4\left(r+1,r+\frac{3}{2},\frac{3}{2}, \frac{3}{2} |x,y\right)$
of the solution space of $\HA$.
Since $\Phi(r|x,y)$ is a solution, there exist functions $c_i(r)$, independent of $x$ and $y$, such that
\begin{equation*}
\Phi(r|x,y) = c_1(r) \Phi_1(r|x,y) + c_2(r) \Phi_2(r|x,y) + c_3(r) \Phi_3(r|x,y) + c_4(r) \Phi_4(r|x,y).
\end{equation*}
The lowest order terms in the Taylor series of $\Phi(r|x,y)$ are
\begin{equation*}
(\sqrt{x}+\sqrt{y}-1)^{-2r} = 1 + 2r \sqrt{x} + 2r \sqrt{y} + (r+2r^2)x + 2r(2r+1) \sqrt{xy} + (r+2r^2) y + \ld 
\end{equation*}
Computing the lowest order terms of $\Phi_1$, $\Phi_2$, $\Phi_3$ and $\Phi_4$ shows that $a_1=1$, $a_2=a_3=2r$ and $a_4=2r(2r+1)$, so indeed $\Phi(r|x,y) = (\sqrt{x}+\sqrt{y}-1)^{-2r}$.
\end{proof}

\begin{lem}\label{lem:fcproof_equation}
$\Phi(r|\vect{z}) = (\sqrt{z_1}+\ld+\sqrt{z_n}-1)^{-2r}$ is a dehomogenized solution of the \ahypergeometric system for $F_C(r,r+\half,\half,\ld,\half|z_1, \ld, z_n)$.
\end{lem}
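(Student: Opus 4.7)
The plan is to adapt the direct-verification strategy of Lemma~\ref{lem:f4proof_equation} to arbitrary $n$. The Horn system for $F_C(r, r+\half, \half, \ld, \half | \vz)$ consists, for $i = 1, \ld, n$, of the second-order equations
\[
\left[\theta_i\bigl(\theta_i - \tfrac{1}{2}\bigr) - z_i(S+r)\bigl(S+r+\tfrac{1}{2}\bigr)\right]\Phi = 0,
\]
where $\theta_i = z_i \partial_i$ and $S = \theta_1 + \ld + \theta_n$. Since every solution of $\HA$ dehomogenizes to a solution of this Horn system, and since the proposed $\Phi(r|\vz) = w^{-2r}$ with $w = \sqrt{z_1}+\ld+\sqrt{z_n}-1$ is manifestly not a (Puiseux-)monomial, it will suffice to check these $n$ equations and then observe that the rehomogenization of $\Phi$ is a convergent combination of $\Gamma$-series.

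Next I would perform the two halves of the verification, both of which are controlled by the single identity $\sum_i \sqrt{z_i} = w + 1$. Using $\partial_i w = \tfrac{1}{2\sqrt{z_i}}$ one finds $\theta_i \Phi = -\tfrac{r\sqrt{z_i}}{w}\Phi$, and iterating produces $\theta_i(\theta_i - \half)\Phi = \tfrac{r(2r+1)\,z_i}{2w^2}\Phi$, in which the half-integer terms proportional to $\sqrt{z_i}/w$ cancel against $-\half\,\theta_i\Phi$. For the other side, summing the identity for $\theta_i\Phi$ over $i$ and invoking $\sum_i \sqrt{z_i} = w+1$ gives $(S+r)\Phi = -\tfrac{r}{w}\Phi$; applying $S + r + \half$ once more, using the same identity in the auxiliary computation of $S(1/w) = -\tfrac{1}{2w} - \tfrac{1}{2w^2}$, yields $(S+r)(S+r+\half)\Phi = \tfrac{r(2r+1)}{2w^2}\Phi$. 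Multiplying by $z_i$ matches the expression for $\theta_i(\theta_i-\half)\Phi$, and the Horn equation holds.

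The main obstacle is really just bookkeeping: no algebraic input is needed beyond the relation $\sum_i \sqrt{z_i} = w+1$, and all the other manipulations are routine applications of the chain rule. The argument is uniform in $n$, so the general $F_C$ statement follows with no more work than the $F_4$ case already treated, and the promised identification with a dehomogenized $\A$-hypergeometric solution is immediate: $\Phi$ is holomorphic at the origin as a function of $\sqrt{z_1},\ld,\sqrt{z_n}$, so after multiplication by $z_1^{-r} z_2^{-r-\half} z_3^{-\half}\ld z_{n+2}^{-\half}$ it becomes a convergent sum of $\Gamma$-series attached to a triangulation strictly analogous to the one used in Lemma~\ref{lem:f4proof_equation}.
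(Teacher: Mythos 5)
Your verification of the $n$ second-order equations is correct, and it is in substance the dehomogenized version of the computation the paper performs: the identities $\theta_i(\theta_i-\half)\Phi=\tfrac{r(2r+1)z_i}{2w^2}\Phi$ and $(S+r)(S+r+\half)\Phi=\tfrac{r(2r+1)}{2w^2}\Phi$ are exactly the paper's observation that $\partial_k\partial_{n+k}\tilde{\Phi}$ and $\partial_1\partial_2\tilde{\Phi}$ coincide. The genuine gap is in your bridge from the classical Lauricella system back to the \ahypergeometric statement. The implication you invoke runs the wrong way: knowing that every dehomogenized solution of $\HA$ solves the Horn-type system does not let you conclude that your particular Horn solution comes from $\HA$; the paper itself stresses (for $G_3$) that the classical system can have strictly more solutions than $\HA$. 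Closing this would require either a rank identity (classical rank $=2^n=\vol(\A)$) valid for all $n$ --- the theorem of Dickenstein--Matusevich--Sadykov used in Lemma~\ref{lem:f4proof_equation} is bivariate, so it does not cover $n>2$ --- or an explicit expansion of $\Phi$ on a $\Gamma$-series basis as in Lemma~\ref{lem:f4proof_equation}, which you do not carry out. Your closing remark that $\Phi$, being holomorphic in $\sqrt{z_1},\ld,\sqrt{z_n}$, ``becomes a convergent sum of $\Gamma$-series'' is circular: to expand a function on that basis you must already know it solves $\HA$, which is the statement to be proved; and ``not a Puiseux monomial'' does not exclude monomial components in such an expansion.

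The repair is cheap and is exactly the paper's route: bypass the solution-space comparison entirely and work with the rehomogenized function $\tilde{\Phi}= z_1^{-r} z_2^{-r-\half} z_3^{-\half}\cdots z_{n+2}^{-\half}\,\Phi$. The Euler equations hold by inspection, your computation is precisely the structure equation $\partial_1\partial_2\tilde{\Phi}=\partial_k\partial_{n+k}\tilde{\Phi}$ for the lattice basis vectors ($3\le k\le n+2$), and the structure equations for all of $\lat$ reduce to these by commuting partial derivatives. Phrased this way, your argument becomes the paper's proof in different coordinates, with no appeal to ranks or to $\Gamma$-series needed.
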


\begin{proof}
The homogeneous function corresponding to $\Phi$ is 
\begin{equation*}
\tilde{\Phi}(r|\vect{z}) = 
z_1^{-r} z_2^{-r-\half} z_3^{-\half} \cdots z_{n+2}^{-\half} \left(\sqrt{\frac{z_3 z_{n+3}}{z_1 z_2}}+\ld+\sqrt{\frac{z_{n+2} z_{2n+2 }}{z_1 z_2}}-1\right)^{-2r}
\end{equation*}
with $z_i$ in $\Phi$ equal to $\frac{z_{i+2} z_{n+i+2}}{z_1 z_2}$ in $\tilde{\Phi}$.
The system $\HA$ is given by $\A = \{\ve_1, \ve_2, \ld, \ve_{n+2}, \ve_1+\ve_2-\ve_3, \ve_1+\ve_2-\ve_4, \ld, \ve_1+\ve_2-\ve_{n+2}\}$ and $\bbeta = (-r, -r-\half, -\half, \ld, -\half)$.
Hence the lattice is $\lat = \bigoplus_{i=1}^n \Z(-\ve_1-\ve_2+\ve_{i+2}+\ve_{n+i+2}) \subs \Z^{2n+2}$.
One can easily show that the structure equations all follow from 
\begin{equation*}
\partial_1 \partial_2 \tilde{\Phi} =
\partial_3 \partial_{n+3} \tilde{\Phi} =
\ld =
\partial_{n+2} \partial_{2n+2} \tilde{\Phi}.
\end{equation*}
An easy computation shows that both $\partial_1 \partial_2 \tilde{\Phi}$ and $\partial_k \partial_{n+k} \tilde{\Phi}$ equal
\begin{equation*}
z_1^{-r-1} z_2^{-r-\frac{3}{2}} z_3^{-\half} \cdots z_{n+2}^{-\half} \left(\sqrt{\frac{z_3 z_{n+3}}{z_1 z_2}}+\ld+\sqrt{\frac{z_{n+2} z_{2n+2 }}{z_1 z_2}}-1\right)^{-2r-2} 
\end{equation*}
for all $3 \leq k \leq n+2$.
Hence the structure equations are satisfied.
The Euler equations
\begin{gather*}
(z_1 \partial_1 + z_{n+3} \partial_{n+3} + \ld + z_{2n+2} \partial_{2n+2}) \tilde{\Phi}(r|\vect{z}) = -r \tilde{\Phi}(r|\vect{z}) \\
(z_2 \partial_2 + z_{n+3} \partial_{n+3} + \ld + z_{2n+2} \partial_{2n+2}) \tilde{\Phi}(r|\vect{z}) = -(r+\half) \tilde{\Phi}(r|\vect{z}) \\
(z_k \partial_k - z_{n+k} \partial_{n+k}) \tilde{\Phi}(r|\vect{z}) = -\half \tilde{\Phi}(r|\vect{z}) \quad \tr{with } 3 \leq k \leq n+2 
\end{gather*}
are easily checked.
\end{proof}

\begin{lem}\label{lem:g3proof_equation}
$G_3(r,1-r|x,y)$ satisfies formulas~\eqref{eq:g3equation} and~\eqref{eq:g3equations_fg}.
\end{lem}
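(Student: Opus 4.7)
The plan is to mirror the strategy of Lemma~\ref{lem:f4proof_equation}: first define $\Phi(r|x,y) = f(x,y)^r \sqrt{g(x,y)/\Delta(x,y)}$ using the algebraic objects of~\eqref{eq:g3equations_fg}, then verify directly that $\Phi$ satisfies the Horn system of differential equations for $G_3(r,1-r|x,y)$, and finally identify $\Phi$ with $G_3(r,1-r|x,y)$ by decomposing it on an explicit basis of the solution space and matching low-order terms. A branch of the cubic $P(t) = yt^3+t^2-t-x$ must be fixed; the natural choice is the unique analytic branch with $f(0,0)=1$, since then $g(0,0)=\Delta(0,0)=1$ and $\Phi$ has constant term $1$, matching $G_3(r,1-r|0,0)=1$.

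First I would differentiate $P(f)=0$ implicitly to obtain $f_x = 1/P'(f)$ and $f_y = -f^3/P'(f)$ with $P'(f) = 3yf^2+2f-1$. A direct computation using the standard discriminant formula for a cubic $at^3+bt^2+ct+d$ confirms that $\Delta$ is precisely the discriminant of $P$ with respect to $t$, so there is a polynomial identity relating $P'(f)^2$ modulo $P(f)$ to $\Delta$; this reduction is what keeps all subsequent expressions finite. The analogous computation for $g$ amounts to checking that $g/\Delta$ is a rational function of $x,y,f$ whose logarithmic derivatives combine cleanly with those of $f^r$.

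Next, I would compute $\partial_x\Phi$, $\partial_y\Phi$ and the three second derivatives of $\Phi$, at each step reducing any $f^k$ with $k\geq 3$ via $yf^3=-f^2+f+x$ so that every expression becomes a $\C(x,y)$-linear combination of $1,f,f^2$. Substituting into the two second order PDEs of the Horn system for $G_3(r,1-r|x,y)$ (as recorded in the relevant table) turns each equation into a polynomial identity in $\C[x,y][f]/(P(f))$; the polynomial $g$ has been designed precisely so that these identities hold, and their verification is purely mechanical once everything has been reduced modulo $P(f)$.

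To finish the identification $\Phi = G_3(r,1-r|x,y)$, I would fix a regular triangulation of the convex hull of the relevant $\A$, obtain the three associated $\Gamma$-series, and adjoin the Puiseux monomial $x^{(r-2)/3}y^{(-r-1)/3}$ to get a basis of the full Horn solution space (which has rank $4$ as noted just before the lemma). Since the chosen branch of $f$ is analytic at the origin with $f(0,0)=1$, the function $\Phi$ admits an honest power series expansion in $x,y$ (times a square root of a function equal to $1$ at the origin), so it cannot involve the fractional exponents $(r-2)/3$ and $(-r-1)/3$ and the coefficient of the Puiseux solution must vanish; the remaining three coefficients are then pinned down by comparing finitely many leading terms with the defining series of $G_3$, exactly as in Lemma~\ref{lem:f4proof_equation}. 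The main obstacle is the algebraic bookkeeping in the verification step: each substitution produces expressions of high degree in $f$ with denominators that are powers of $\Delta$ and $g$, and one must be systematic about reducing modulo $P(f)$ and clearing denominators before the two Horn PDEs become transparent polynomial identities.
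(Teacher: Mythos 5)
Your proposal follows essentially the same route as the paper: verify by implicit differentiation and reduction modulo the cubic that $f^r\sqrt{g/\Delta}$ satisfies the Horn system, then expand it on the basis consisting of the three $\Gamma$-series plus the Puiseux monomial $x^{(r-2)/3}y^{(-r-1)/3}$, rule out the non-integral exponents by holomorphy, and match leading terms to identify it with $G_3(r,1-r|x,y)$. The only additions are your explicit branch choice $f(0,0)=1$ and the discriminant check, which the paper leaves implicit; otherwise the arguments coincide.
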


\begin{proof}
For $G_3(r,1-r|x,y)$, the Horn system of differential equations is
\begin{align}
(x (2\theta_x-\theta_y+1-r)(2\theta_x-\theta_y+2-r) - (-\theta_x+2\theta_y+r)\theta_x) \Phi & = 0 \label{eq:system_G3} \notag \\
(y (-\theta_x+2\theta_y+r)(-\theta_x+2\theta_y+1-r) - (2\theta_x-2\theta_y+1-r)\theta_y) \Phi & = 0 
\end{align}
where $\theta_x = x \frac{\partial}{\partial x}$ and $\theta_y = y \frac{\partial}{\partial y}$.
Define $\Phi(r|x,y) = f(x,y)^r \sqrt{\frac{g(x,y)}{\Delta(x,y)}}$ where $f, g$ and $\Delta$ are as in~\eqref{eq:g3equations_fg}.
We will first show that $\Phi$ is a solution of the Horn system and then show that $\Phi(r|x,y)=G_3(r,1-r|x,y)$. 

To show that $\Phi$ is a solution of the Horn system, we compute the partial derivatives of $f$ and $g$ using implicit differentiation.
For example,
\begin{equation*}
\frac{\partial f}{\partial x} = \frac{1}{-1+2f(x,y)+3yf(x,y)^2}.
\end{equation*}
Substituting this in~\eqref{eq:system_G3} and dividing by $\Phi(r|x,y)$ gives two expressions containing integral powers of $f$ and $g$.
Using $g(x,y) = - 3y^2 f(x,y)^2 - 2y f(x,y) + 4y + 1$, we obtain expressions containing integral powers of 
$f$ only.
As $f$ satisfies an equation of degree 3, these expressions can be reduced to contain only the powers 1, $f$ and $f^2$.
It turns out that these expressions are 0, so $\Phi(r|x,y)$ is a solution of~\eqref{eq:system_G3}.

To show that $\Phi(r|x,y) = G_3(r,1-r|x,y)$, we write $\Phi$ on a basis of the solution space of the Horn system.
However, the Horn system is not equivalent to the \ahypergeometric system.
It follows from~\cite[Theorem~2.5]{dickenstein_matusevich_sadykov_bivariate_hypergeometric_dmodules} that the rank of~\eqref{eq:system_G3} is $\vol(\A)+1 = 4$.
Apart from the three independent solutions of the \ahypergeometric system, there is also a Puiseux monomial solution.
With the triangulation $\{\{1,2\}, \{2,3\}, \{1,4\}\}$ of $\A$, we get the three solutions $\Phi_1(r|x,y) = G_3(r,1-r|x,y)$, 
\begin{align*}
\Phi_2(r|x,y) & = x^r \sum_{m, n \geq 0} \frac{\poch{r+1}{2m+3n}}{\poch{r+1}{m+2n} m! n!} (-x)^m (x^2 y)^n 
\qquad
\tr{and} \\
\Phi_3(r|x,y) & = y^{1-r} \sum_{m, n \geq 0} \frac{\poch{2-r}{3m+2n}}{\poch{2-r}{2m+n} m! n!} (x y^2)^m y^n. 
\end{align*}
The Puiseux monomial solution is $\Phi_4(r|x,y) = x^{\frac{r-2}{3}} y^{\frac{-r-1}{3}}$.
Note that $\Phi(r|x,y)$ is a holomorphic solution of the system~\eqref{eq:system_G3}, so it can be expanded in a series that contains only integral powers of $x$ and $y$.
Since it can be written on the basis mentioned above, it is a linear combination of $\Phi_1$, $\Phi_2$, $\Phi_3$ and $\Phi_4$.
By comparing the local exponents, we see that $\Phi(r|x,y) = \Phi_1(r|x,y) = G_3(r|x,y)$ for all $r \not\in \Z$.
By continuity, this also holds for $r \in \Z$.
\end{proof}

Although the proofs of the above results are elementary, it might not be clear how to find these functions $\Phi$ and the formulas.
We end with some remarks and an example about this.
Let $H_{\A}(\bbeta(r))$ by a family of \ahypergeometric systems depending on a parameter $r$ and suppose that there exists a family $\Phi(r|\vect{z})$ of algebraic solutions of the form~\eqref{eq:function_power}.
We compute the regular triangulations and the corresponding bases for the solution space of $H_{\A}(\bbeta(r))$ and choose a suitable basis $\{\Phi_1(r|\vect{z}), \ld, \Phi_k(r|\vect{z})\}$ (in practice, we choose a basis containing the Appell or Horn function).
Then $\Phi(r|\vect{z})$ is a linear combination of the basis elements and hence can be written as
\begin{equation*}
\Phi(r|\vect{z}) = c_1(r) \Phi_1(r|\vect{z}) + \ld + c_k(r) \Phi_k(r|\vect{z})
\end{equation*}
with unknown coefficients $c_i(r)$ depending on $r$.
Note that for all $r,s$, 
\begin{equation}\label{eq:phirs}
\Phi(r|\vect{z}) \Phi(s|\vect{z}) = f(\vect{z})^{r+s} g(\vect{z})^2 = \Phi\left(\frac{r+s}{2}|\vect{z}\right)^2.
\end{equation}
Substituting values for $r$ and $s$ (for example, $r=0$ and $s=2$) and comparing the first few terms of these power series of the left and right hand side gives a system of quadratic equations for $c_i(r)$, $c_i(s)$ and $c_i(\frac{r+s}{2})$.
We solve this system and guess that $c_i(r)$ is an `easy' function of $r$, for example linear or quadratic.
This gives us a guess for $\Phi$.
As a further check, we compute 
\begin{equation*}
\left(\frac{\Phi(r|x,y)}{\Phi(s|x,y)}\right)^{\inv{r-s}} =
\left(\frac{f(x,y)^r g(x,y)}{f(x,y)^s g(x,y)}\right)^{\inv{r-s}} =
f
\end{equation*}
for several values of $r$ and $s$ and check that the first terms of the power series coincide for all choices of $r$ and $s$.

Having convinced ourselves that $\Phi$ is of the desired form, we try to find the formulas for $f$ and $g$.
First we compute the first terms of the power series expansions of $f$ and $g$.
This can easily be done, for example using $g(x,y)=\Phi(0|x,y)$ and $f(x,y) = \frac{\Phi(1|x,y)}{\Phi(0|x,y)}$.
Finding the formulas for $f$ and $g$ is the hardest part and requires some luck and good guesses.
For some functions, we find $g=1$.
Otherwise, we can use substitutions such as $u=\sqrt{x}$ or $u=\frac{x}{2}$ to remove square roots of $x$ and $y$ and factors $2^m$ coming from Pochhammer symbols $\poch{\half}{m}$ to obtain a power series of which we recognize the coefficients.
The Online Database of Integer Sequences~\cite{oeis} can be helpful in this.
In many cases, substituting $x=0$ or $y=0$ gives a well-known function such as the Gauss function $\gauss$.
This implies that substituting $x=0$ or $y=0$ in the equation for $\Phi$ must give an equation the Gauss function satisfies.

To find formulas for the algebraic Lauricella $F_C$ functions, we simply guess the most obvious generalizations of the formulas for the Appell $F_4$ functions.

We illustrate this with $F_4(r,r+\half,\half,\half|x,y)$.

\begin{ex}\label{ex:f4_equations}
For $F_4(r,r+\half,\half,\half|x,y)$, we have $\A = \{\ve_1, \ve_2, \ve_3, \ve_4, \ve_1+\ve_2-\ve_3, \ve_1+\ve_2-\ve_4\}$ and $\bbeta = (-r, -r-\half, -\half, -\half)$. 
Suppose that $\Phi(r|z)$ is a solution of the form~\eqref{eq:function_power}.
There are coefficients $c_1(r), \ld, c_4(r)$ such that after dehomogenization
\begin{equation*}
\Phi(r|x,y) = c_1(r) \Phi_1(r|x,y) + c_2(r) \Phi_2(r|x,y) + c_3(r) \Phi_3(r|x,y) + c_4(r) \Phi_4(r|x,y).
\end{equation*}
As it suffices to determine $\Phi$ up to a scalar, we simplify by assuming that $c_1(r)=1$ (this need not be possible, if all solutions have $c_1(r)=0$ for some $r$, but we can try it as a first guess).
By taking $r=0$ and $s=2$, applying relation~\eqref{eq:phirs} and comparing the coefficients of the first few powers of $x$ and $y$, we get a system of quadratic equations for the coefficients $c_i(0)$, $c_i(1)$ and $c_i(2)$.
Solving this, we obtain $c_2(0)=c_3(0)=0$, $c_2(1)=c_3(1)=2$, $c_2(2)=c_3(2)=4$, $c_4(0)=0$, $c_4(1)=6$ and $c_4(2)=20$. 
Assuming that that coefficients are easy functions of $r$, we guess that $c_1(r)=1$, $c_2(r)=c_3(r)=2r$ and $c_4(r)=2r(2r+1)$.
Hence we get
\begin{multline*}
\Phi(r|x,y) = F_4(r,r+\half,\half,\half|x,y) + 2r \sqrt{y} F_4(r+\half,r+1,\half,\frac{3}{2}|x,y) + \\
2r \sqrt{x} F_4(r+\half,r+1,\frac{3}{2},\half|x,y) + 2r(2r+1) \sqrt{xy} F_4(r+1,r+\frac{3}{2},\frac{3}{2},\frac{3}{2}|x,y).
\end{multline*}
Substituting $r=0$, we see that $g(x,y)=F_4(0,\half,\half,\half|x,y) = 1$, so $f(x,y) = \Phi(1|x,y)$.
Note that $f$ is symmetric in $x$ and $y$.
Define $h(x,y)=f(x^2,y^2)$.
One easily computes that $h(x,0)=1+2x+3x^2+4x^3+5x^4+6x^5+7x^6+8x^7+9x^8+10x^9+\ld$.
These are the first terms of the Taylor series of $\inv{(1-x)^2}$, so we expect $h(x,0)$ and $h(0,y)$ to be equal to $\inv{(1-x)^2}$ and $\inv{(1-y)^2}$, respectively.
The `easiest' function satisfying this is $h(x,y)=\inv{(1-x-y)^2}$.
Computing the power series of both functions, we see that this indeed holds up to degree 10.
Hence we guess that $\Phi(r|x,y) = \inv{(1-\sqrt{x}-\sqrt{y})^2}$.
\end{ex}

\clearpage

\begin{table}
\centering
\caption{Formulas for $F_C\left(r,-r,\half,\ld,\half|z_1, \ld, z_n\right)$
\label{tab:F41_formula}}
\begin{tabular}{p{\eerstekollauricella}@{}p{\tweedekollauricella}}
\toprule
Function: & $F_C\left(r,-r,\half,\ld,\half|z_1, \ld, z_n\right)$ \\[6pt]
Horn equations ($n=2$): & $(\theta_x (\theta_x - \half) - x (\theta_x + \theta_y + r) (\theta_x + \theta_y - r)) F = 0$ \\[6pt]
 & $(\theta_y (\theta_y - \half) - y (\theta_x + \theta_y + r) (\theta_x + \theta_y - r)) F = 0$ \\[6pt]
$\HA$: & $\A = \{\ve_1, \ve_2, \ld, \ve_{n+2}, \ve_1+\ve_2-\ve_3, \ve_1+\ve_2-\ve_4, \ld, \ve_1+\ve_2-\ve_{n+2}\}$ \\[6pt] 
 & $\bbeta = (-r, r, -\half, \ld, -\half)$ \\[6pt] 
Rank: & $2^n$ \\[6pt]
Triangulation ($n=2$): & $\{\{1, 2, 3, 4\}, \{1, 2, 3, 6\}, \{1, 2, 4, 5\}, \{1, 2, 5, 6\}\}$ \\[6pt]
Basis ($n=2$): & $\Phi_1(r|x,y) = F_4\left(r,-r,\half,\half|x,y\right)$ \\[6pt] 
 & $\Phi_2(r|x,y) = \sqrt{y} F_4\left(r+\half, -r+\half, \half, \frac{3}{2}|x,y\right)$ \\[6pt] 
 & $\Phi_3(r|x,y) = \sqrt{x} F_4\left(r+\half, -r+\half, \frac{3}{2}, \half|x,y\right)$ \\[6pt] 
 & $\Phi_4(r|x,y) = \sqrt{x y} F_4\left(r+1,-r+1,\frac{3}{2}, \frac{3}{2} |x,y\right)$ \\[6pt]
Solution of~\eqref{eq:function_power} ($n=2$): & $\Phi = \Phi_1 + 2 i r \Phi_2 - 2 i r \Phi_3 + 4 r^2 \Phi_4$ \\[6pt]
Formulas: & $\Phi(r|\vect{z}) = f(\vect{z})^r$ \\[6pt]
 & $f(\vect{z}) = h(\vect{z}) + \sqrt{h(\vect{z})^2-1}$ \\[6pt]
 & $h(\vect{z}) = 1 - 2 (\sqrt{z_1}+\ld+\sqrt{z_n})^2$ \\
\bottomrule
\end{tabular}
\end{table}

\begin{table}
\centering
\caption{Formulas for $F_C\left(r,r+\half,\half,\ld,\half|z_1, \ld, z_n\right)$
\label{tab:F42_formula}}
\begin{tabular}{p{\eerstekollauricella}@{}p{\tweedekollauricella}}
\toprule
Function: & $F_C\left(r,r+\half,\half,\ld,\half|z_1, \ld, z_n\right)$ \\[6pt]
Horn equations ($n=2$): & $(\theta_x (\theta_x - \half) - x (\theta_x + \theta_y + r) (\theta_x + \theta_y + r + \half)) F = 0$ \\[6pt]
 & $(\theta_y (\theta_y - \half) - y (\theta_x + \theta_y + r) (\theta_x + \theta_y + r + \half)) F = 0$ \\[6pt]
$\HA$: & $\A = \{\ve_1, \ve_2, \ld, \ve_{n+2}, \ve_1+\ve_2-\ve_3, \ve_1+\ve_2-\ve_4, \ld, \ve_1+\ve_2-\ve_{n+2}\}$ \\[6pt] 
 & $\bbeta = (-r, -r-\half, -\half, \ld, -\half)$ \\[6pt] 
Rank: & $2^n$ \\[6pt]
Triangulation ($n=2$): & $\{\{1, 2, 3, 4\}, \{1, 2, 3, 6\}, \{1, 2, 4, 5\}, \{1, 2, 5, 6\}\}$ \\[6pt]
Basis ($n=2$): & $\Phi_1(r|x,y) = F_4\left(r,r+\half,\half,\half|x,y\right)$ \\[6pt] 
 & $\Phi_2(r|x,y) = \sqrt{y} F_4\left(r+\half, r+1, \half, \frac{3}{2}|x,y\right)$ \\[6pt] 
 & $\Phi_3(r|x,y) = \sqrt{x} F_4\left(r+\half, r+1, \frac{3}{2}, \half|x,y\right)$ \\[6pt] 
 & $\Phi_4(r|x,y) = \sqrt{x y} F_4\left(r+1,r+\frac{3}{2},\frac{3}{2}, \frac{3}{2} |x,y\right)$ \\[6pt]
Solution of~\eqref{eq:function_power} ($n=2$): & $\Phi = \Phi_1 + 2 r \Phi_2 + 2 r \Phi_3 + 2r(2r+1) \Phi_4$ \\[6pt]
Formulas: & $\Phi(r|\vect{z}) = f(\vect{z})^r$ \\[6pt]
 & $f(\vect{z}) = (\sqrt{z_1}+\ld+\sqrt{z_n}-1)^{-2}$ \\
\bottomrule
\end{tabular}
\end{table}

\begin{table}
\centering
\caption{Formulas for $F_C\left(r,r+\half,\half,\ld,\half,2r|z_1, \ld, z_n\right)$
\label{tab:F43_formula}}
\begin{tabular}{p{\eerstekollauricella}@{}p{\tweedekollauricella}}
\toprule
Function: & $F_C\left(r,r+\half,\half,\ld,\half,2r|z_1, \ld, z_n\right)$ \\[6pt]
Horn equations ($n=2$): & $(\theta_x (\theta_x - \half) - x (\theta_x + \theta_y + r) (\theta_x + \theta_y + r + \half)) F = 0$ \\[6pt]
 & $(\theta_y (\theta_y +2r-1) - y (\theta_x + \theta_y + r) (\theta_x + \theta_y + r + \half)) F = 0$ \\[6pt]
$\HA$: & $\A = \{\ve_1, \ve_2, \ld, \ve_{n+2}, \ve_1+\ve_2-\ve_3, \ve_1+\ve_2-\ve_4, \ld, \ve_1+\ve_2-\ve_{n+2}\}$ \\[6pt] 
 & $\bbeta = (-r, -r-\half, -\half, \ld, -\half, 2r-1)$ \\[6pt] 
Rank: & $2^n$ \\[6pt]
Triangulation ($n=2$): & $\{\{1, 2, 3, 4\}, \{1, 2, 3, 6\}, \{1, 2, 4, 5\}, \{1, 2, 5, 6\}\}$ \\[6pt]
Basis ($n=2$): & $\Phi_1(r|x,y) = F_4\left(r,r+\half,\half,2r|x,y\right)$ \\[6pt] 
 & $\Phi_2(r|x,y) = y^{1-2r} F_4\left(-r+1, -r+\frac{3}{2}, \half, 2-2r|x,y\right)$ \\[6pt] 
 & $\Phi_3(r|x,y) = \sqrt{x} F_4\left(r+\half, r+1, \frac{3}{2}, 2r|x,y\right)$ \\[6pt] 
 & $\Phi_4(r|x,y) = \sqrt{x} y^{1-2r} F_4\left(\frac{3}{2}-r,2-r,\frac{3}{2},2-2r |x,y\right)$ \\[6pt]
Solution of~\eqref{eq:function_power} ($n=2$): & $\Phi = \Phi_1 + 2 r \Phi_3$ \\[6pt]
Formulas: & $\Phi(r|\vect{z}) = f(\vect{z})^r g(\vect{z})$ \\[6pt]
 & $f(\vect{z}) = \frac{8 h(\vect{z})^2 - 4 z_n + 8 h(\vect{z}) \sqrt{h(\vect{z})^2-z_n}}{z_n^2}$ \\[6pt]
 & $g(\vect{z}) = \half - \frac{h(\vect{z})}{2 \sqrt{h(\vect{z})^2 - z_n}}$ \\[6pt]
 & $h(\vect{z}) = \sqrt{z_1}+\ld+\sqrt{z_{n-1}}-1$ \\
\bottomrule
\end{tabular}
\end{table}

\begin{table}
\centering
\caption{Formulas for $G_3(r,1-r|x,y)$
\label{tab:G3_formula}}
\begin{tabular}{p{\eerstekolhorn}@{}p{\tweedekolhorn}}
\toprule
Function: & $G_3(r,1-r|x,y)$ \\[6pt]
Horn equations: & $(\theta_x (-\theta_x + 2\theta_y + r) - x (2\theta_x - \theta_y - r + 1) (2\theta_x - \theta_y - r + 2)) F = 0$ \\[6pt]
 & $(\theta_y (2\theta_x - \theta_y + 1 - r) - y (-\theta_x + 2\theta_y + r) (-\theta_x + 2\theta_y + r + 1)) F = 0$ \\[6pt]
$\HA$: & $\A = \{\ve_1+\ve_2, \ve_2, -\ve_1+\ve_2, 2\ve_1+\ve_2\}$ \\[6pt] 
 & $\bbeta = (-r, -1)$ \\[6pt] 
Rank: & Horn system: 4; \ahypergeometric system: 3 \\[6pt]
Triangulation: & $\{\{1, 2\}, \{2, 3\}, \{1, 4\}\}$ \\[6pt]
Basis: & $\Phi_1(r|x,y) = G_3(r,1-r|x,y)$ \\[6pt] 
 & $\Phi_2(r|x,y) = x^r \sum \frac{\poch{r+1}{2m+3n}}{\poch{r+1}{m+2n} m! n!} (-x)^m (x^2 y)^n$ \\[6pt] 
 & $\Phi_3(r|x,y) = y^{1-r} \sum \frac{\poch{2-r}{3m+2n}}{\poch{2-r}{2m+n} m! n!} (x y^2)^m y^n$ \\[6pt] 
 & Horn system: $\Phi_4(r|x,y) = x^{\frac{r-2}{3}} y^{\frac{-r-1}{3}}$ \\[6pt]
Solution of~\eqref{eq:function_power}: & $\Phi = \Phi_1$ \\[6pt]
Formulas: & $\Phi(r|x,y) = f(x,y)^r \sqrt{\frac{g(x,y)}{\Delta(x,y)}}$ \\[6pt] 
 & $y f(x,y)^3 + f(x,y)^2 - f(x,y) + x = 0$ \\[6pt]
 & $g(x,y) = -3y^2 f(x,y)^2 - 2y f(x,y) + 4y + 1$ \\[6pt]
 & $\Delta(x,y) = 1+4x+4y+18x y - 27 x^2 y^2$ \\
\bottomrule
\end{tabular}
\end{table}

\begin{table}
\centering
\caption{Formulas for $H_4\left(r,-r,\half,\half|x,y\right)$
\label{tab:H41_formula}}
\begin{tabular}{p{\eerstekolhorn}@{}p{\tweedekolhorn}}
\toprule
Function: & $H_4\left(r,-r,\half,\half|x,y\right)$ \\[6pt]
Horn equations: & $(\theta_x (\theta_x - \half) - x (2\theta_x + \theta_y + r) (2\theta_x + \theta_y + r + 1)) F= 0$ \\[6pt]
 & $(\theta_y (\theta_y - \half) - y (2\theta_x + \theta_y + r) (\theta_y - r)) F = 0$ \\[6pt]
$\HA$: & $\A = \{\ve_1, \ve_2, \ve_3, \ve_4, 2\ve_1-\ve_3, \ve_1+\ve_2-\ve_4\}$ \\[6pt] 
 & $\bbeta = (-r, r, -\half, -\half)$ \\[6pt] 
Rank: & 4 \\[6pt]
Triangulation: & $\{\{1, 2, 3, 4\}, \{1, 2, 3, 6\}, \{1, 2, 4, 5\}, \{1, 2, 5, 6\}\}$ \\[6pt]
Basis: & $\Phi_1(r|x,y) = H_4\left(r,-r,\half,\half|x,y\right) $ \\[6pt] 
 & $\Phi_2(r|x,y) = \sqrt{y} H_4\left(r+\half,-r+\half,\half,\frac{3}{2}|x,y\right)$ \\[6pt] 
 & $\Phi_3(r|x,y) = \sqrt{x} H_4\left(r+1,-r,\frac{3}{2},\half|x,y\right)$ \\[6pt] 
 & $\Phi_4(r|x,y) = \sqrt{x y} H_4\left(r+\frac{3}{2},-r+\half,\frac{3}{2},\frac{3}{2}|x,y\right)$ \\[6pt]
Solution of~\eqref{eq:function_power}: & $\Phi = \Phi_1 - 2ir \Phi_2 + 2r \Phi_3 - 2 i r(2r+1) \Phi_4$ \\[6pt]
Formulas: & $\Phi(r|x,y) = f(x,y)^r$ \\[6pt] 
 & $f(x,y) = \frac{1-2\sqrt{x}+2y+2\sqrt{y(-1+2\sqrt{x}+y)}}{(1-2\sqrt{x})^2}$ \\
\bottomrule
\end{tabular}
\end{table}

\begin{table}
\centering
\caption{Formulas for $H_4\left(r,r+\half,\half,\half|x,y\right)$
\label{tab:H42_formula}}
\begin{tabular}{p{\eerstekolhorn}@{}p{\tweedekolhorn}}
\toprule
Function: & $H_4\left(r,r+\half,\half,\half|x,y\right)$ \\[6pt]
Horn equations: & $(\theta_x (\theta_x - \half) - x (2\theta_x + \theta_y + r) (2\theta_x + \theta_y + r + 1)) F = 0$ \\[6pt]
 & $(\theta_y (\theta_y - \half) - y (2\theta_x + \theta_y + r) (\theta_y + r + \half)) F = 0$ \\[6pt]
$\HA$: & $\A = \{\ve_1, \ve_2, \ve_3, \ve_4, 2\ve_1-\ve_3, \ve_1+\ve_2-\ve_4\}$ \\[6pt] 
 & $\bbeta = (-r, -r-\half, -\half, -\half)$ \\[6pt] 
Rank: & 4 \\[6pt]
Triangulation: & $\{\{1, 2, 3, 4\}, \{1, 2, 3, 6\}, \{1, 2, 4, 5\}, \{1, 2, 5, 6\}\}$ \\[6pt]
Basis: & $\Phi_1(r|x,y) = H_4\left(r,r+\half,\half,\half|x,y\right) $ \\[6pt] 
 & $\Phi_2(r|x,y) = \sqrt{y} H_4\left(r+\half,r+1,\half,\frac{3}{2}|x,y\right)$ \\[6pt] 
 & $\Phi_3(r|x,y) = \sqrt{x} H_4\left(r+1,r+\half,\frac{3}{2},\half|x,y\right)$ \\[6pt] 
 & $\Phi_4(r|x,y) = \sqrt{x y} H_4\left(r+\frac{3}{2},r+1,\frac{3}{2},\frac{3}{2}|x,y\right)$ \\[6pt]
Solution of~\eqref{eq:function_power}: & $\Phi = \Phi_1 - 2r \Phi_2 + 2r \Phi_3 - 2r(2r+1) \Phi_4$ \\[6pt]
Formulas: & $\Phi(r|x,y) = f(x,y)^r$ \\[6pt] 
 & $f(x,y) = \inv{(\sqrt{1-2\sqrt{x}}+\sqrt{y})^2}$ \\
\bottomrule
\end{tabular}
\end{table}

\begin{table}
\centering
\caption{Formulas for $H_4\left(r,r+\half,\half,2r|x,y\right)$
\label{tab:H43_formula}}
\begin{tabular}{p{\eerstekolhorn}@{}p{\tweedekolhorn}}
\toprule
Function: & $H_4\left(r,r+\half,\half,2r|x,y\right)$ \\[6pt]
Horn equations: & $(\theta_x (\theta_x - \half) - x (2\theta_x + \theta_y + r) (2\theta_x + \theta_y + r + 1)) F = 0$ \\[6pt]
 & $(\theta_y (\theta_y +2r-1) - y (2\theta_x + \theta_y + r) (\theta_y + r + \half)) F = 0$ \\[6pt]
$\HA$: & $\A = \{\ve_1, \ve_2, \ve_3, \ve_4, 2\ve_1-\ve_3, \ve_1+\ve_2-\ve_4\}$ \\[6pt] 
 & $\bbeta = (-r, -r-\half, -\half, 2r-1)$ \\[6pt] 
Rank: & 4 \\[6pt]
Triangulation: & $\{\{1, 2, 3, 4\}, \{1, 2, 3, 6\}, \{1, 2, 4, 5\}, \{1, 2, 5, 6\}\}$ \\[6pt]
Basis: & $\Phi_1(r|x,y) = H_4\left(r,r+\half,\half,2r|x,y\right)$ \\[6pt] 
 & $\Phi_2(r|x,y) = y^{1-2r} H_4\left(-r+1, -r+\frac{3}{2}, \half, -2r+2|x,y\right)$ \\[6pt] 
 & $\Phi_3(r|x,y) = \sqrt{x} H_4\left(r+1,r+\half,\frac{3}{2},2r|x,y\right)$ \\[6pt] 
 & $\Phi_4(r|x,y) = \sqrt{x} y^{1-2r} H_4\left(-r+2,-r+\frac{3}{2},\frac{3}{2},2-2r|x,y\right)$ \\[6pt]
Solution of~\eqref{eq:function_power}: & $\Phi = \Phi_1 + 2r \Phi_3$ \\[6pt]
Formulas: & $\Phi(r|x,y) = f(x,y)^r g(x,y)$ \\[6pt] 
 & $f(x,y) = \frac{-16\sqrt{x}-4y+8-4h(x,y)}{y^2}$ \\[6pt]
 & $g(x,y) = \half + \frac{1-2\sqrt{x}}{2h(x,y)}$ \\[6pt]
 & $h(x,y) = \sqrt{(2\sqrt{x}-1)(2\sqrt{x}+y-1)}$ \\
\bottomrule
\end{tabular}
\end{table}

\begin{table}
\centering
\caption{Formulas for $H_5(r,-r,\half|x,y)$
\label{tab:H5_formula}}
\begin{tabular}{p{\eerstekolhorn}@{}p{\tweedekolhorn}}
\toprule
Function: & $H_5(r,-r,\half|x,y)$ \\[6pt]
Horn equations: & $(\theta_x (-\theta_x + \theta_y - r) - x (2\theta_x + \theta_y + r) (2\theta_x + \theta_y + r + 1)) F = 0$ \\[6pt]
 & $(\theta_y (\theta_y - \half) - y (2\theta_x + \theta_y + r) (-\theta_x + \theta_y -r)) F = 0$ \\[6pt]
$\HA$: & $\A = \{\ve_1, \ve_2, \ve_3, 2\ve_1-\ve_2, \ve_1+\ve_2-\ve_3\}$  \\[6pt]
 & $\bbeta = (-r,r,-\half)$ \\[6pt] 
Rank: & 4 \\[6pt]
Triangulation: & $\{\{1, 2, 3, 4\}, \{1, 2, 3, 6\}, \{1, 2, 4, 5\}, \{1, 2, 5, 6\}\}$ \\[6pt]
Basis: & $\Phi_1(r|x,y) = H_5(r,-r,\half | x,y)$ \\[6pt] 
 & $\Phi_2(r|x,y) = \sqrt{y} H_5(r+\half,-r+\half,\frac{3}{2} | x,y)$ \\[6pt] 
 & $\Phi_3(r|x,y) = x^{-r} \sum \frac{\poch{-r}{2m+3n}}{\poch{\half}{n} \poch{1-r}{m+n} m! n!} (-x)^m (x y)^n $ \\[6pt] 
 & $\Phi_4(r|x,y) = x^{-r+\half} \sqrt{y} \sum \frac{\poch{\frac{3}{2}-r}{2m+3n}}{\poch{\frac{3}{2}-r}{m+n} \poch{\frac{3}{2}}{n} m! n!} (-x)^m (x y)^n$ \\[6pt]
Solution of~\eqref{eq:function_power}: & $\Phi = \Phi_1+ 2ir \Phi_2$ \\[6pt]
Formulas: & $\Phi(r|x,y) = f(x,y)^r$ \\[6pt] 
 & $x^2 f^4 + 2 x f^3 + (1-2x) f^2 + (4y-2) f + 1 = 0$ \\
\bottomrule
\end{tabular}
\end{table}

\clearpage

\bibliographystyle{mybibstyle5}
\bibliography{authors,journalsabbreviations,bibliography}

\end{document}